\theoremstyle{plain}
\newtheorem{theorem}{Theorem}[section]
\newtheorem{prop}[theorem]{Proposition}
\newtheorem{lemma}{Lemma}[section]
\theoremstyle{definition}
\theoremstyle{remark}
\numberwithin{equation}{section}
\begin{document}
	
	\title[on the exponential Diophantine eq. $(n-1)^{x}+(n+2)^{y}=n^{z}$]
	{on the exponential Diophantine equation $(n-1)^{x}+(n+2)^{y}=n^{z}$}
	
	\author{Hairong Bai, El\.{I}f K{\i}z{\i}ldere, G\"{o}khan Soydan and Pingzhi Yuan}
	
	\address{{\bf Hairong Bai} \\
		School of Mathematics\\
		South China Normal University\\
		Guangzhou 510631, China}
	\email{584846139@qq.com}
	
	\address{{\bf Elif K{\i}z{\i}ldere}\\
		Department of Mathematics \\
		Bursa Uluda\u{g} University\\
		16059 Bursa, Turkey}
	\email{elfkzldre@gmail.com}
	
	\address{{\bf G\"{o}khan Soydan} \\
		Department of Mathematics \\
		Bursa Uluda\u{g} University\\
		16059 Bursa, Turkey}
	\email{gsoydan@uludag.edu.tr }
	
	\address{{\bf Pingzhi Yuan} \\
		School of Mathematics\\
		South China Normal University\\
		Guangzhou 510631, China}
	\email{mcsypz@mail.sysu.edu.cn}
	
	\newcommand{\acr}{\newline\indent}

	\thanks{}
	
	\subjclass[2010]{Primary 11D61, Secondary 11D41}
	\keywords{Exponential Diophantine equation, primitive divisors of Lucas sequences, Jacobi symbol, lower bounds for linear forms in two logarithms}
	
	\begin{abstract}
		Suppose that $n$ is a positive integer. In this paper, we show that the exponential Diophantine equation
		$$(n-1)^{x}+(n+2)^{y}=n^{z},  \, \, n\geq 2, \, \, xyz\neq 0$$
		has only the positive integer solutions $(n,x,y,z)=(3,2,1,2), (3,1,2,3)$. The main tools on the proofs are Baker's theory and Bilu-Hanrot-Voutier's result on primitive divisors of Lucas numbers.
	\end{abstract}

	\maketitle

\section{Introduction and the main result}\label{sec:1}
As usual, we denote the set of all integers by $\mathbb{Z}$, the set of positive integers by $\mathbb{N}$. Suppose that $a, b, c$ are pair-wise coprime positive integers. Then we call the
equation
\begin{equation}\label{11.1}
a^x+b^y=c^z, \ \ x, y, z \in \mathbb{N}
\end{equation}
as an exponential Diophantine equation. Many authors have studied the above equation for given $a,b,c\in\mathbb{N}.$

Equation \eqref{11.1} was first considered by Mahler, \cite{Ma}. He proved that the equation \eqref{11.1} has finitely many solutions with $a, b, c >1$. Since his method was based on a
$p$-adic generalisation of Thue-Siegel method, it was ineffective as it provided no bounds for the size of possible solutions. Later, Gel'fond, \cite{Ge}, gave an effective result for
solutions of \eqref{11.1}. His method was based on Baker's theory which uses linear forms in the logarithms of algebraic numbers.

Using elementary number theory methods such as congruences, Jacobi symbol and standard divisibility arguments in algebraic number theory involving ideals in quadratic (or cubic) number
fields, the complete solutions of the equation \eqref{11.1} where $a, b, c$ are distinct primes $\leq 17$ were determined by some authors (see  \cite{DYL18}, \cite{Ha}, \cite{Na} and
\cite{U}).

Consider the equation \eqref{11.1} for Pythagorean numbers $a$, $b$, $c$. A famous unsolved problem concerning the exponential Diophantine equation \eqref{11.1} was suggested by
Je\'{s}manowicz, \cite{J}. He has conjectured that the unique solution in positive integers of equation \eqref{11.1} is only $(x, y, z)=(2, 2, 2)$, where $a, b, c$ are satisfying
$a^2+b^2=c^2$, i.e. they are Pythagorean triples. This conjecture have been solved for many special cases. Different conjectures concerning equation \eqref{11.1} were identified and
discussed.
One of these conjectures which is an extension of Je\'{s}manowicz conjecture, was suggested by Terai.

Terai conjectured that the unique solution of the Diophantine equation \eqref{11.1} is $(x,y,z)=(u,v,w)$ except for some $(a,b,c)$ where $a,b,c,u,v,w \in \mathbb{N}$ are fixed, $u, v, w \geq
2$ and $\gcd(a,b)=1$ and satisfying $a^u+b^v=c^w$ (see \cite{Cao}, \cite{HY18}, \cite{Le}, \cite{Mi1}, \cite{Mi2}, \cite{T1}, \cite{T2}, \cite{YH18}). The correctness of this conjecture for
many special cases has been proved.
Nevertheless, it has been still unsolved. Recently, a survey paper on the conjectures of Je\'{s}manowicz and Terai has been published by Soydan, Demirci, Cang\"{u}l and Togb\'{e} (see
\cite{SCDT} for the details about these conjectures).

Now we take the exponential Diophantine equation
\begin{equation}\label{1.2}
(tb-1)^x+b^y=(tb+1)^z
\end{equation}
with $t \geq1$. Clearly, it suffices to consider the case where $b$ is even. So, we see that equation \eqref{1.2} has the following trivial solutions:
\begin{equation*}
(x,y,z)= \left\{\def\arraystretch{1.2}
\begin{array}{@{}c@{\quad}l@{}}
(i,1,1): i\geq1 ,\,(j,3,2): j\geq1 & \text{if $b=2$ and $t=1$,}\\
(2,k+1,2) & \text{if $t=b^k/4$ with $k\geq1$,}\\
(1,1,1) & \text{if $b=2$,}\\
(1,13,2) & \text{if $b=2$ and $t=45$,} \\
\end{array}\right.
\end{equation*}
In \cite{He-To}, He and Togb\'{e} solved equation \eqref{1.2} for $t=1$. After the work of \cite{He-To}, Miyazaki and Togb\'{e} \cite{Mi-To} extended their result by proving equation
\eqref{1.2} and they proved that equation \eqref{1.2} has no non-trivial solution when $t>1$ is odd. Lastly, Miyazaki, Togb\'{e} and Yuan \cite{Mi-To-Yu} showed that equation \eqref{1.2} has
no non-trivial solution for any positive integer $t$, completing the study of \eqref{1.2}.

In this paper, we consider the exponential Diophantine equation
\begin{equation}\label{1.1}
(n-1)^{x}+(n+2)^{y}=n^{z}
\end{equation}
with $n\geq 2$, $xyz\neq 0$.

Indeed, the equation \eqref{1.1} is a generalisation of the equations $2^x+5^y=3^z$ and $4^x+7^y=5^z$ considered by Nagell \cite{Na}. Our main result is the
following theorem.
\begin{theorem} \label{theo:1.1}
Let $n$ be a positive integer. Then the equation \eqref{1.1} has only positive integer solutions $(n,x,y,z)=(3,2,1,2), (3,1,2,3).$
\end{theorem}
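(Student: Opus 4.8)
The plan is to strip away the easy cases with elementary congruences, isolate a small family of boundary exponent-configurations that yield to the primitive divisor theorem and Jacobi symbols, and dispatch the remaining large solutions with a lower bound for linear forms in two logarithms.

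First I would settle the parity of $n$. For even $n$ the term $(n-1)^{x}$ is odd while $(n+2)^{y}$ is even, so the left-hand side is odd, whereas $n^{z}$ is even; hence no even $n$ can occur and $n$ is odd. Next I would exploit the three natural reductions. Modulo $n-1$ the equation collapses to $3^{y}\equiv 1\pmod{n-1}$, which already forces $3\nmid n-1$, i.e. $n\not\equiv 1\pmod 3$. Modulo $n$ it gives $2^{y}\equiv(-1)^{x+1}\pmod n$ and hence $2^{2y}\equiv 1\pmod n$, so $2y$ is a multiple of $\operatorname{ord}_n(2)$ and therefore $y\gg \log n$. Modulo $3$ (using $n\not\equiv 1\pmod 3$) one separates the two surviving residue classes, obtaining that $x,y$ have opposite parity when $3\mid n$ and that $z$ is odd when $n\equiv 2\pmod 3$; a further reduction modulo $4$ then pins down the parities of the remaining exponents in terms of $n\bmod 4$ and of whether $x=1$.

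With the parities fixed, I would treat every configuration in which one exponent is even by factoring. When $y$ or $z$ is even the corresponding power is a perfect square, and rewriting the equation as a difference of two squares over $\mathbb{Z}$, or as a product of conjugate ideals in a suitable imaginary quadratic order, produces a Lucas (or Lehmer) sequence whose relevant term is forced to be a prime power. The Bilu--Hanrot--Voutier theorem on primitive divisors then bounds the index of that term, leaving only finitely many shapes, each excluded by a Jacobi symbol computation modulo a well-chosen prime dividing $n-1$, $n$ or $n+2$. The smallest modulus $n=3$, i.e. $2^{x}+5^{y}=3^{z}$, is handled in the same spirit, recovering exactly $(x,y,z)=(2,1,2)$ and $(1,2,3)$.

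The hard part is the remaining regime of large solutions in which no exponent is small or even. Here I would introduce the linear form $\Lambda=z\log n-x\log(n-1)$ (or $z\log n-y\log(n+2)$, according to which of $(n-1)^{x}$, $(n+2)^{y}$ dominates), observe that $0<\Lambda$ is of size roughly $(n+2)^{y}/n^{z}$ and so decays like a negative power of $n$, and bound it below by an explicit estimate of Laurent type for linear forms in two logarithms. A single such estimate, however, only controls a difference of exponents in terms of $\log\max(x,y,z)$ and does not by itself bound $n$. The crux is therefore to combine it with a second linear form and with the divisibility $n\mid 2^{2y}-1$ coming from $2^{2y}\equiv1\pmod n$: the former pins the exponents to one another, the latter ties them to $n$, and together they must be forced to confine $\max(x,y,z)$ and then $n$ to an explicitly bounded range, after which a direct verification closes the argument. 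The delicacy throughout is that the three bases $n-1$, $n$, $n+2$ are multiplicatively almost equal, so every linear form is extremely small and the dominant term is ambiguous; choosing the right pair of logarithms in each regime and optimising the parameters of the two-logarithm estimate so as to beat the elementary lower bounds is where the real effort lies.
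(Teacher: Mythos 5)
Your toolkit is the right one (congruences and Jacobi symbols, Bilu--Hanrot--Voutier via factorization in an imaginary quadratic order, and a Laurent-type bound for linear forms in two logarithms), and your opening reductions (oddness of $n$, the congruences modulo $n-1$ and $n$) are correct. But the proposal has a genuine gap at exactly the point you flag as "the crux." Your only elementary lower bound on the exponents is $\operatorname{ord}_n(2)\mid 2y$, which gives merely $y\gg\log n$. The two-logarithm estimate, applied to $\Omega=z\log n-y\log(n+2)$ with $|\Omega|\ll (n+2)^{-(y-1)}$, yields an upper bound of the very same order, $y\ll\log n$ (with an absolute constant of a few thousand). Two bounds of the same order in $\log n$ cannot confine $n$, and the proposed "combination with a second linear form" is left entirely unspecified; as written it is a hope, not an argument. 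The idea you are missing is the paper's elementary size comparison of the three terms: because $n-1$, $n$, $n+2$ are multiplicatively so close, any solution with $n>64$ must satisfy $x>z>4n$ or $z>y>n$ (after disposing of the boundary case $x=z=y+1$, which yields $n=3$). That is a lower bound \emph{linear} in $n$, not logarithmic, and it is what makes everything else click: it gives $z>n>h(-4(n\pm\cdot))$ so that Lemma~\ref{Represent} forces a large index $z/z_0$ in the Lucas sequence (without which BHV says nothing), and in the final case it collides with $y\ll\log n$ from Laurent to give $n<2591$, after which a finite computation closes the proof.

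Two smaller points. First, your factorization step is keyed to the wrong parities: the useful quadratic forms here are $(n-1)X^2+Y^2=n^z$ and $X^2+(n+2)Y^2=n^z$, i.e. one exploits the parity of $x$ and $y$ (the case $z$ even with $x,y$ odd is killed by a congruence modulo $n+1$, which forces $n+1=2^t$, and then modulo $3$); and even then BHV only eliminates indices $\ge 5$, so the indices $2,3,4$ must be excluded by hand via the explicit binomial expansions, which your sketch does not mention. Second, even granting the bound $n<2591$, the all-odd, $x=1$ case still requires a nontrivial computer search over $n,y,z$ in the stated ranges; the paper does this with PARI/GP, and some such verification is unavoidable in your plan as well.
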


\section{Auxiliary results}
In this section, we give some useful lemmas which will be used to prove our main result.

\begin{prop}\label{Hua} Let $D$ be a positive integer. We have
	$$h(-4D)<\frac{4\sqrt{D}}{\pi}\log(2e\sqrt{D}).$$
	Further, $ h(-4D)$ is equal to the class number of the unique quadratic order of discriminant $-4D$.
\end{prop}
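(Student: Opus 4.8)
The plan is to derive the bound from the Dirichlet class number formula together with a completely elementary estimate for the value of the associated $L$-function at $s=1$. Write $q=4D$ and let $\chi$ be the real, non-principal Dirichlet character modulo $q$ given by the Kronecker symbol $\chi(n)=\left(\frac{-4D}{n}\right)$. The structure theory of quadratic orders shows that for each negative integer $\equiv 0,1 \pmod 4$ there is exactly one order in an imaginary quadratic field having that discriminant (the conductor $f$ and the fundamental discriminant $d_K$ are uniquely recovered from $f^{2}d_K=-4D$); this yields the second assertion and identifies $h(-4D)$ with the class number of that order. For this order the analytic class number formula reads
\[
h(-4D)=\frac{w\sqrt{4D}}{2\pi}\,L(1,\chi)=\frac{w\sqrt D}{\pi}\,L(1,\chi),\qquad L(1,\chi)=\sum_{n=1}^{\infty}\frac{\chi(n)}{n},
\]
where $w$ is the number of units. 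For $D\ge 2$ one has $w=2$, so $h(-4D)=\frac{2\sqrt D}{\pi}L(1,\chi)$, and the desired inequality is equivalent to
\[
L(1,\chi)<2\log\!\bigl(2e\sqrt D\bigr)=\log(4D)+2=\log q+2 .
\]

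Thus the analytic core is a weak upper bound for $L(1,\chi)$, with an entire unit of constant to spare. I would obtain it by splitting the Dirichlet series at $n=q$. For the head, the trivial estimate gives
\[
\Bigl|\sum_{n\le q}\frac{\chi(n)}{n}\Bigr|\le\sum_{n\le q}\frac1n\le 1+\log q .
\]
For the tail I use partial summation together with the elementary bound $|A(t)|\le q/2$ on the partial character sums $A(t)=\sum_{n\le t}\chi(n)$. This bound is immediate: since $\chi$ is non-principal, $A(q)=\sum_{n=1}^{q}\chi(n)=0$, whence $A$ is $q$-periodic; for $t\le q/2$ one has $|A(t)|\le t\le q/2$, and for $q/2<t<q$ one has $A(t)=-\sum_{t<n\le q}\chi(n)$, a sum of fewer than $q/2$ terms. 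Abel summation then gives
\[
\Bigl|\sum_{n>q}\frac{\chi(n)}{n}\Bigr|\le\frac{\sup_{t\ge q}|A(t)|}{q}\le\frac12 .
\]
Combining, $L(1,\chi)\le \log q+\tfrac32<\log q+2$, which is exactly the required inequality, and hence $h(-4D)<\frac{4\sqrt D}{\pi}\log(2e\sqrt D)$.

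Finally, the remaining small case $D=1$ (discriminant $-4$, where $w=4$) is checked by hand: $h(-4)=1$ while $\frac{4}{\pi}\log(2e)=\frac{4}{\pi}(1+\log 2)\approx 2.16$, so the inequality holds. The only genuinely delicate points are bookkeeping rather than depth: one must invoke the class number formula in the correct normalization for a possibly \emph{non-fundamental} discriminant $-4D$ (so that $\chi$ is the Kronecker symbol modulo $4D$, which may be imprimitive, and $L(1,\chi)$ is the corresponding Dirichlet series — exact-value checks at $D=1,2,4$ confirm the normalization), and one must correctly track the unit factor $w$ so as to isolate $D=1$. I expect the main obstacle to be precisely this normalization of the class number formula for orders; once it is pinned down, the $L$-function estimate is routine and the generous constant makes the final comparison immediate.
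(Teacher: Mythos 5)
Your argument is correct, but it does considerably more than the paper, which offers no proof at all: the authors simply cite Theorems 11.4.3, 12.10.1 and 12.14.3 of Hua's \emph{Introduction to Number Theory} for the inequality and Definition 5.2.7 of Cohen's book for the identification of $h(-4D)$ with the class number of the order of discriminant $-4D$. What you have written is in effect a self-contained reconstruction of the content of those cited theorems: Dirichlet's class number formula in the normalization valid for a possibly non-fundamental discriminant $-4D$ (with the imprimitive Kronecker character modulo $4D$), followed by the elementary estimate $L(1,\chi)\le\log(4D)+\tfrac32$ obtained by splitting the series at $n=4D$, bounding the head by $1+\log(4D)$ and the tail by $\tfrac12$ via Abel summation and the trivial bound $|\sum_{n\le t}\chi(n)|\le 2D$ (which uses only that the full character sum vanishes and periodicity --- no P\'olya--Vinogradov needed). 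Since $2\log(2e\sqrt D)=\log(4D)+2$, your constant $\tfrac32$ leaves exactly the slack the stated bound requires, and the unit count ($w=2$ for $D\ge2$, with $D=1$ checked by hand) is handled correctly. The one point where your write-up still leans on the literature is the class number formula for non-fundamental discriminants; you rightly flag this as the delicate normalization, but verifying it on small cases is a sanity check rather than a proof, so a precise reference (or the standard reduction $-4D=f^2d_K$ with the Euler-factor correction) should be supplied there. The trade-off is clear: the paper's citation is shorter, while your derivation makes explicit how weak an $L$-function estimate suffices for the bound actually used.
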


\begin{proof} The upper bound for $ h(-4D)$ follows from Theorems 11.4.3, 12.10.1 and 12.14.3 of Hua \cite{KH82}, while the last assertion is contained in Definition 5.2.7 of \cite{HC93}.
\end{proof}

A Lucas pair is a pair $(\alpha, \, \beta)$ of algebraic integers such that $\alpha+\beta$ and $\alpha\beta$ are non-zero coprime rational
integers and $\alpha/\beta$ is not a root of unity. Given a Lucas pair 	$(\alpha, \, \beta)$, one defines the corresponding sequence of
Lucas numbers by $$u_n=u_n(\alpha,\beta)=\frac{\alpha^n-\beta^n}{\alpha-\beta},\quad n=0, \, 1, \, 2, \ldots.$$

Let $(\alpha, \, \beta)$ be a Lucas pair. We recall that a prime number $p$ is a primitive divisor of the
Lucas number $u_n(\alpha, \, \beta)$ if $p$ divides $u_n$ but does not divide 	$(\alpha-\beta)^2u_1u_2\cdots u_{n-1}$. We say that a Lucas sequence is an $n-$defective Lucas sequence if
$u_n$ has no primitive divisor. The key argument for the proofs in this section is the definitive result obtained by Bilu, Hanrot and Voutier \cite{BHV01}.

\begin{lemma}{\rm(\cite{BHV01})}\label{le21} For any integer $n>30$, every Lucas sequence is non n-defective. Further, for any positive integer $n<30$, all n-defective Lucas sequence are
	explicitly determined.
\end{lemma}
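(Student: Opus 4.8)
The plan is to establish exactly the statement of the Bilu--Hanrot--Voutier theorem, following the cyclotomic-factorization strategy: reduce the existence of a primitive divisor of $u_n$ to a lower bound for a single algebraic factor, split into a ``hyperbolic'' case handled by elementary growth estimates and a ``near root of unity'' case handled by lower bounds for linear forms in two logarithms, and finish with an explicit finite computation. First I would set up the cyclotomic factorization. Writing $\zeta$ for a primitive $n$-th root of unity and $\Phi_n(\alpha,\beta)=\prod_{\gcd(k,n)=1}(\alpha-\zeta^{k}\beta)$ for the homogenized cyclotomic value, one has $\alpha^{n}-\beta^{n}=\prod_{d\mid n}\Phi_d(\alpha,\beta)$, hence $u_n=\prod_{d\mid n,\,d>1}\Phi_d(\alpha,\beta)$. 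The classical arithmetic of these factors (Carmichael, Schinzel, Stewart) shows that a prime is a primitive divisor of $u_n$ essentially when it divides $\Phi_n(\alpha,\beta)$ but not $n$; consequently, for $n>6$, if $u_n$ has no primitive divisor then $|\Phi_n(\alpha,\beta)|$ is bounded by the largest prime factor of $n$ up to a fixed factor. This converts $n$-defectiveness into the single assertion that $|\Phi_n(\alpha,\beta)|$ is small.

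Next I would exploit the product formula $|\Phi_n(\alpha,\beta)|=\prod_{\gcd(k,n)=1}|\alpha-\zeta^{k}\beta|$. When $\alpha,\beta$ are real (that is, $(\alpha+\beta)^2-4\alpha\beta>0$) with $|\alpha/\beta|$ bounded away from $1$, every factor is comparably large and $|\Phi_n(\alpha,\beta)|$ grows like $|\alpha|^{\phi(n)}$; comparing this with the small upper bound above forces $n$ to be bounded at once. The genuinely delicate regime is the complex-conjugate one, $|\alpha|=|\beta|$, where $\alpha/\beta=\e^{\ii\theta}$ lies on the unit circle. Here an individual factor $|\alpha-\zeta^{k}\beta|$ can be extremely small, precisely when some $\zeta^{k}$ approaches $\alpha/\beta$, so the size of $\Phi_n(\alpha,\beta)$ is governed by how well $\theta/2\pi$ is approximated by fractions with denominator $n$.

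The heart of the argument is therefore a linear form in two logarithms. Defectiveness forces some factor $\alpha-\zeta^{k}\beta$ to be abnormally small, which is equivalent to the linear form $\Lambda=n\log(\alpha/\beta)-2\pi\ii\,k$ (an integer combination of $\log\alpha$, $\log\beta$ and $\log(-1)$) being exponentially small in $n$, say $\log|\Lambda|\le -c\,n$. I would feed this into the sharp Laurent--Mignotte--Nesterenko lower bounds for $|\Lambda|$ in terms of the heights of $\alpha,\beta$ and $\log n$, yielding $\log|\Lambda|\ge -C(\log n)^{2}$ with explicit, moderate constants depending only on the degree. The clash between these two inequalities produces an absolute, explicit bound $n\le N_0$. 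This is the step I expect to be the main obstacle: securing an $N_0$ small enough to make the subsequent search feasible demands the strongest available two-logarithm estimates together with careful bookkeeping of the conjugate case and of the small indices $k$.

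Finally, for each remaining $n$ with $6<n\le N_0$ (and the small exceptional values treated separately by hand), I would translate the defectiveness condition into explicit equations in the integer parameters $P,Q$, where $\alpha,\beta$ are the roots of $X^{2}-PX+Q$: these reduce to a finite list of Thue equations and generalized Ramanujan--Nagell equations. Solving them with standard Thue-equation resolution and direct search enumerates all $n$-defective sequences, and verifying that none survives for $n>30$ yields both halves of the lemma. Carrying out and certifying this finite but extensive computation is the second place where real effort is required, since several sporadic and symmetric families must be isolated and checked individually.
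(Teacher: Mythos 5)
The paper offers no proof of this lemma at all: it is imported verbatim from Bilu--Hanrot--Voutier \cite{BHV01}, with the explicit lists of defective indices drawn from Voutier's tables (cf.\ Lemma \ref{le22}), so the only meaningful comparison is with the original BHV argument. Judged against that, your outline reconstructs the architecture faithfully: the factorization $\alpha^n-\beta^n=\prod_{d\mid n}\Phi_d(\alpha,\beta)$, the Carmichael--Schinzel--Stewart criterion reducing $n$-defectiveness (for $n>6$) to $|\Phi_n(\alpha,\beta)|$ being at most the largest prime factor of $n$ up to bounded factors, the split between the real case (handled by elementary growth of $|\Phi_n|$) and the complex-conjugate case where $\alpha/\beta$ lies on the unit circle and a single factor $|\alpha-\zeta^k\beta|$ can be tiny, the appeal to lower bounds for linear forms in two logarithms, and the terminal finite enumeration via Thue equations. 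As a roadmap of \cite{BHV01} this is accurate.

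As a proof, however, it has genuine gaps at precisely the two points you flag as ``where real effort is required.'' First, the step from defectiveness to $\log|\Lambda|\le -cn$ is not established: smallness of the product $|\Phi_n(\alpha,\beta)|$ does not by itself isolate one small factor; you must show that at most one factor (or one conjugate pair $\zeta^{\pm k}$) can approach $\alpha/\beta$ and bound all the others from below, and the resulting upper bound carries $\varphi(n)$ together with the height $h(\alpha/\beta)$, which also enters the two-logarithm lower bound, so the ``clash'' does not immediately produce an absolute $N_0$ without eliminating the height by careful weighting. Second, and decisively, off-the-shelf Laurent--Mignotte--Nesterenko estimates do not yield $N_0=30$; they leave a bound far too large for the subsequent Thue-equation search over all discriminants to be feasible. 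Reaching $n\le 30$ required a purpose-built linear-forms estimate (Mignotte's appendix to \cite{BHV01}) within the Lehmer-pair formalism, and the closing computation --- hundreds of Thue and Ramanujan--Nagell equations, plus the small indices $n\le 4$ and $n=6$ where the defective sequences form \emph{infinite parametric families} rather than finite lists, so ``explicitly determined'' must be read accordingly --- is asserted rather than carried out. So your proposal is a correct sketch of the known proof, not a proof; in the context of this paper the honest justification of the lemma is exactly what the authors give, namely the citation.
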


\begin{lemma}{\rm(\cite{Vo95})}\label{le22} For $4<n\leq{30}$, and $n$ is odd, the following gives a complete list, up to the sign of $\alpha$ and $\beta$, of all Lucas sequences whose n-th
	element has no primitive divisor.
	
	$n=5,\quad  (\alpha, \beta)=(\pm{\frac{1\pm\sqrt{5}}{2}},\,\,\pm{\frac{1\mp\sqrt{5}}{2}}), \,\,\,\,(\pm{\frac{1\pm\sqrt{-7}}{2}},\,\pm{\frac{1\mp\sqrt{-7}}{2}})$,
	$$\qquad \qquad \qquad \qquad (\pm{\frac{1\pm\sqrt{-15}}{2}},\,\ \pm{\frac{1\mp\sqrt{-15}}{2}}), 	
	(\pm(6\pm\sqrt{-19}),\,\ \pm(6\mp\sqrt{-19})),$$ $$\qquad \qquad \qquad \qquad (\pm(1\pm\sqrt{-10}),\,\,\,\,\pm(1\mp\sqrt{-10})), 	
	(\pm{\frac{1\pm\sqrt{-11}}{2}},\,\pm{\frac{1\mp\sqrt{-11}}{2}}),$$ $\qquad \qquad \qquad \qquad(\pm(6\pm\sqrt{-341}),\pm(6\mp\sqrt{-341}));$
	
	$n=7, \quad (\alpha, \beta)=(\pm{\frac{1\pm\sqrt{-7}}{2}},\pm{\frac{1\mp\sqrt{-7}}{2}}),\,\,\,\, (\pm{\frac{1\pm\sqrt{-19}}{2}},\,\,\pm{\frac{1\mp\sqrt{-19}}{2}})$;
	
	$n=13,\quad (\alpha, \beta)=(\pm{\frac{1\pm\sqrt{-7}}{2}},\,\,
	\pm{\frac{1\mp\sqrt{-7}}{2}})$.
\end{lemma}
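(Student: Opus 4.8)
The statement is P. M. Voutier's explicit classification \cite{Vo95} of the odd-index defective Lucas sequences, and it is the odd-index complement to the uniform non-defectiveness result quoted in Lemma \ref{le21}. Because the table is imported verbatim from the literature, the plan is to outline the mechanism that produces such a finite list rather than to reproduce the full computation. First I would record the cyclotomic factorization $u_n = \prod_{1 < d \mid n} \Phi_d(\alpha, \beta)$, where $\Phi_d(X, Y) = Y^{\phi(d)}\Phi_d(X/Y)$ is the homogenized $d$-th cyclotomic polynomial, together with the arithmetic dictionary for primitive divisors: a prime $p \mid u_n$ fails to be primitive only if $p \mid (\alpha - \beta)^2$ or $p$ is the single intrinsic prime dividing $n$. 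Hence $u_n$ has no primitive divisor exactly when $\Phi_n(\alpha, \beta)$ is, up to sign, supported on that intrinsic prime, which forces $|\Phi_n(\alpha, \beta)|$ to be bounded by a quantity depending only on $n$ (in essence the largest prime factor of $n$).

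The next step is to convert this upper bound into a bound on the parameters $(P, Q) = (\alpha + \beta, \alpha\beta)$. Writing $\Phi_n(\alpha, \beta) = \prod_{\gcd(k, n) = 1}(\alpha - \zeta_n^k \beta)$, one has, in the real case $|\alpha| > |\beta|$, a lower bound of the shape $|\Phi_n(\alpha, \beta)| \gg |\alpha|^{\phi(n)}$, so comparison with the upper bound confines $|\alpha|$, and therefore the discriminant $D = (\alpha - \beta)^2$, to an explicit finite range. Restricting to odd $n$ with $4 < n \le 30$, I would run this index by index: for each such $n$ the defectiveness condition traps $(P, Q)$ in a finite box, after which a direct search over that box, testing the primitive-divisor property of $u_n$ in each instance, returns precisely the pairs displayed (the rows $n = 5, 7, 13$, with their $\sqrt{5}, \sqrt{-7}, \sqrt{-15}, \sqrt{-19}, \dots$ entries).

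The hard part --- and the reason this is an imported theorem rather than a short lemma --- will be making the lower bound for $|\Phi_n(\alpha, \beta)|$ both effective and sharp enough that the search box is genuinely exhaustible. The estimate $|\alpha|^{\phi(n)}$ degenerates precisely in the complex-conjugate case $|\alpha| = |\beta|$, which is where almost all entries of the table live: there each factor $|\alpha - \zeta_n^k \beta|$ can be small, and controlling the product amounts to keeping $\alpha/\beta$ away from roots of unity. This is exactly the point at which lower bounds for linear forms in two logarithms (the Laurent--Mignotte--Nesterenko type estimates flagged in the keywords) must be invoked to obtain an effective separation, after which the residual cases again reduce to a bounded enumeration. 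For the purposes of the present paper the resulting list is then used as a black box, to eliminate the Lucas sequences that arise from equation \eqref{1.1}.
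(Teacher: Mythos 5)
This lemma is quoted verbatim from Voutier \cite{Vo95}; the paper supplies no proof of it, so there is nothing internal to compare your argument against, and treating it as an imported black box is exactly what the authors do. Your sketch of the underlying mechanism is essentially faithful to the literature: the cyclotomic factorization $u_n=\prod_{1<d\mid n}\Phi_d(\alpha,\beta)$, the fact that non-primitivity forces $|\Phi_n(\alpha,\beta)|$ to be bounded by (essentially) the largest prime factor of $n$, and the reduction to a finite search over $(P,Q)=(\alpha+\beta,\alpha\beta)$. The one point I would correct is the role you assign to linear forms in two logarithms: for the fixed small indices $n\le 30$ treated here, the lower bound on $|\Phi_n(\alpha,\beta)|$ in the complex-conjugate case is obtained by an elementary Liouville-type norm argument (the norm of $\alpha-\zeta_n^k\beta$ over the compositum is a nonzero rational integer), which already confines $(P,Q)$ to an explicitly searchable box; Baker-type estimates are what make the \emph{uniform} statement for all $n>30$ (Lemma \ref{le21}, the Bilu--Hanrot--Voutier theorem) work, not the finite table quoted in this lemma.
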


Suppose $c\in\{1,2,4\}$, $\lambda^2=c$ and $2\nmid k$ if $c\in\{1,2\}$. Then we have
\begin{lemma}\label{Represent}{\rm (\cite[Lemma 1]{BS01} or \cite[Corollary 3.1]{Y04} )} Let $D_1$ and $D_2$ be coprime positive integers and let $k\ge 2$ be an integer coprime with $D_1D_2$.
	
	$(i)$ Let $D_1D_2\not\in\{1,3\}$.
	The solutions of equation
	\begin{equation}\label{Quad} D_1 X^2+ D_2Y^2=\lambda^2k^Z, \quad  X, \, Y, \, Z\in\mathbb{Z}, \,\, \gcd(X, \, Y)=1, \,\,  Z>0
	\end{equation}
	can be put into at most $2^{\omega(k)-1}$ classes, where $\omega(k)$ denotes the number of distinct prime divisors of $k$. Further, in each such class $S$ there is a unique solution $(X_1, \, Y_1,  Z_1)$ such that $X_1>0,Y_1>0$ and $Z_1$ is minimal among the
	solutions of $S$. This minimal solution satisfies $Z_1$ divides $h(-4D)$ if $D_1=1$ or $D_2=1$ and $2Z_1$ divides $h(-4D)$
	otherwise. Moreover, every solution $(X, \, Y, \, Z)$ of \eqref{Quad} belonging to $S$ can be expressed as
	\begin{equation}\label{Quad1}Z= Z_1t, \quad \left(\frac{X\sqrt{D_1}+ Y\sqrt{-D_2}}{\lambda}\right)=\lambda_1\left(\frac{X_1\sqrt{D_1}+
		\lambda_2Y_1\sqrt{-D_2}}{\lambda}\right)^t\end{equation}
	where $t\ge1$ is an integer, $\lambda_1\in\{1, \, -1, \, i, \, -i\}$ and $\lambda_2\in\{1, \, -1\}$. If $\lambda=\sqrt{2}$, then $t$ is odd. Further $\lambda_1\in\{1, \, -1\}$ if $D_2\ne1$
	or $t$ is odd and $\lambda_1\in\{i, \, -i\}$ if $D_2=1$ and $t$ is even.
	
	$(ii)$ Let $D_1D_2\in\{1,3\}$. Then the solutions of \eqref{Quad} can be put into at most $2^{\omega(k)-1}$ classes. Further, in each such class $S$ there is a unique solution $(X_1, \, Y_1,
	\, 1)$ with $X_1>0,Y_1>0$ such that every solution 		$(X, \, Y, \, Z)$ of \eqref{Quad} belonging to $S$ satisfies \eqref{Quad1}
	with $Z_1=1$, $\lambda_1\in\{1, \, -1, \, i, \, -i\}$ if $D_1D_2=1$, $$\lambda_1\in\{-1, 1,\, i, \, -i, \,
	\frac{1+i\sqrt{3}}{2}, \, \frac{1-i\sqrt{3}}{2}, \, \frac{-1+i\sqrt{3}}{2}, \, \frac{-1-i\sqrt{3}}{2} \}$$
	and $\lambda_2\in\{-1, 1\}$ if $D_1D_2=3$.
\end{lemma}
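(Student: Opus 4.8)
The plan is to translate equation \eqref{Quad} into a statement about principal ideals in an imaginary quadratic order and then read off the structure of all solutions from the class group furnished by Proposition \ref{Hua}. Write $D=D_1D_2$ and work in $K=\QQ(\sqrt{-D})$ together with the order $\mathcal O$ of discriminant $-4D$. To a solution $(X,Y,Z)$ I attach the algebraic integer $\beta=D_1X+Y\sqrt{-D}\in\mathcal O$, which is just $\sqrt{D_1}$ times $X\sqrt{D_1}+Y\sqrt{-D_2}$, the object appearing in \eqref{Quad1}. A direct computation gives $N(\beta)=\beta\bar\beta=D_1^2X^2+DY^2=D_1(D_1X^2+D_2Y^2)=D_1\lambda^2k^Z$, so the entire problem is transported to the factorisation of $(k)$ inside $\mathcal O$, with the minimal field $Z_1$ governed by the class number $h(-4D)$.

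First I would analyse the prime ideals above the rational primes dividing $k$. Since $k$ is coprime to $D=D_1D_2$ throughout, and coprime to $2$ whenever $c\in\{1,2\}$, every odd prime $p\mid k$ is unramified in $\mathcal O$. The coprimality $\gcd(X,Y)=1$ then forces any prime $\mathfrak p$ over such a $p$ to divide at most one of $(\beta),(\bar\beta)$: a common factor would divide both $\beta+\bar\beta=2D_1X$ and $\beta-\bar\beta=2Y\sqrt{-D}$, hence (as $\mathfrak p\nmid 2D_1$ and $N(\sqrt{-D})=D$ is coprime to $p$) would divide both $X$ and $Y$. Consequently each split prime $p=\mathfrak p\bar{\mathfrak p}\mid k$ forces an independent binary choice — the full $p$-part of $(\beta)$ is a power of $\mathfrak p$ or a power of $\bar{\mathfrak p}$ — while inert primes are pinned down by conjugation symmetry and contribute no branching. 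Grouping solutions by this assignment yields at most $2^{\omega(k)}$ families, and complex conjugation $(X,Y)\mapsto(X,-Y)$ identifies each family with its mirror image, leaving the stated bound of $2^{\omega(k)-1}$ classes.

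Within a fixed class the prime assignment makes the $k$-part of $(\beta)$ equal to $\mathfrak A^{Z}$ for one definite integral ideal $\mathfrak A$, up to a fixed contribution supported on the primes dividing $\lambda^2D_1$. The minimal solution $(X_1,Y_1,Z_1)$ then records the least positive exponent making the relevant ideal principal, that is, essentially the order of the class of $\mathfrak A$ in the class group of $\mathcal O$, whence $Z_1\mid h(-4D)$. The dichotomy $Z_1\mid h(-4D)$ versus $2Z_1\mid h(-4D)$ is exactly the difference between a principal and an order-two ambiguous form class: when $D_1=1$ or $D_2=1$ the form $D_1x^2+D_2y^2$ is principal, whereas when both $D_1,D_2>1$ it is a non-principal form of order two in the class group (its class squares to the identity), and it is this order-two class that doubles the minimal exponent. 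Once the relevant power is principal, a general solution in the class satisfies $(\beta)=(\gamma)^t$ with $Z=Z_1t$, so $\beta=\eta\,\gamma^{t}$ for a unit $\eta\in\mathcal O^\times$; rewriting this in terms of $X\sqrt{D_1}+Y\sqrt{-D_2}$ yields precisely the parametrisation \eqref{Quad1}, with $\lambda_1$ ranging over $\mathcal O^\times$ and $\lambda_2\in\{1,-1\}$ the residual sign of $Y$.

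The remaining assertions are consequences of the unit group. For $D=D_1D_2\notin\{1,3\}$ one has $\mathcal O^\times=\{1,-1\}$, giving $\lambda_1\in\{1,-1,i,-i\}$ and $\lambda_2\in\{1,-1\}$ after accounting for conjugation; the extra roots of unity of $\QQ(i)$ and $\QQ(\sqrt{-3})$ produce the enlarged unit lists in part $(ii)$ for $D_1D_2=1$ and $D_1D_2=3$ respectively. The parity refinements — that $\lambda_1\in\{1,-1\}$ when $D_2\neq1$ or $t$ is odd while $\lambda_1\in\{i,-i\}$ when $D_2=1$ and $t$ is even, and that $t$ must be odd when $\lambda=\sqrt2$ — follow from tracking the prime $2$: the factor $\lambda^2=c\in\{1,2,4\}$ fixes the $2$-adic valuation on both sides, and the case $c=2$ (with $k$ odd) balances only for odd $t$. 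I expect the genuinely delicate part to be this bookkeeping at the prime $2$ and at the ramified primes dividing $D$, where one must pass carefully between the non-maximal order $\mathcal O$ of discriminant $-4D$ and the maximal order while keeping track of which ideals remain invertible; the class count and the appearance of $h(-4D)$ are then formal. This is the route along which the statement, available in \cite{BS01} and \cite{Y04}, is reconstructed.
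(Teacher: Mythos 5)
The paper does not prove Lemma \ref{Represent} at all: it is imported wholesale from \cite{BS01} and \cite{Y04}. So your sketch can only be measured against the method of those sources, and in outline it does follow that method (pass to $\beta=D_1X+Y\sqrt{-D}$ of norm $D_1\lambda^2k^Z$ in the order of discriminant $-4D$, classify solutions by the choice of prime ideal above each split prime of $k$, and control the minimal exponent via the class group). As a proof, however, it has genuine gaps, and two of its assertions are wrong as stated. First, inert primes are not ``pinned down by conjugation symmetry'': if an odd $p\mid k$ were inert, then $p\mid N(\beta)$ would force $p\mid\beta$, hence $p\mid X$ and $p\mid Y$ (since $p\nmid D_1$), contradicting $\gcd(X,Y)=1$; the correct statement is that an inert prime dividing $k$ precludes solutions in that class altogether. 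Second, and more seriously, the pivotal sentence ``a general solution in the class satisfies $(\beta)=(\gamma)^t$ with $Z=Z_1t$, so $\beta=\eta\gamma^t$'' assumes the two facts that carry the whole lemma. The class-group argument alone shows only that the exponents of solutions in $S$ run through a coset $Z_1+m\ZZ_{\ge0}$, where $m=\mathrm{ord}[\mathfrak A]$; that every such exponent is a multiple of $Z_1$ is equivalent to $m\in\{Z_1,2Z_1\}$, which is essentially the divisibility assertion $Z_1\mid h(-4D)$ or $2Z_1\mid h(-4D)$ being proved, so asserting it is circular. Moreover the ideal identity is false as written when $D_1>1$: from $(\beta)=\mathfrak d_1\mathfrak A^{Z}$ and $(\gamma)=\mathfrak d_1\mathfrak A^{Z_1}$ one gets $(\gamma)^t=(D_1)^{(t-1)/2}\,\mathfrak d_1\,\mathfrak A^{Z_1t}$ for odd $t$, so the element relation carries an extra factor $D_1^{(t-1)/2}$; the $\sqrt{D_1}$-normalisation in \eqref{Quad1} exists precisely to absorb this, and verifying it is the main computation, not a ``rewriting''. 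You also need, and do not give, the argument that the ambiguous class of the form $(D_1,0,D_2)$ has exact order $2$ when $D_1,D_2\neq1$ (it does: a proper representation $D_1=u^2+Dv^2$ with $D_1<D$ forces $v=0$, $u=\pm1$, hence $D_1=1$), which is what separates the two divisibility cases.

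Your unit bookkeeping is also misattributed exactly where the lists are interesting. For $D_2=1$ and $D=D_1>1$ the order $\ZZ[\sqrt{-D}]$ has units $\pm1$ only, yet the lemma requires $\lambda_1=\pm i$ for even $t$: that factor is not an element of $\mathcal O^\times$ but a normaliser inside the biquadratic field $\QQ(\sqrt{D_1},i)$, needed because an even power of $X_1\sqrt{D_1}+\lambda_2Y_1i$ lies in $\QQ(\sqrt{-D_1})$ and must be rotated back to the shape $X\sqrt{D_1}+Yi$. Likewise, for $D_1D_2=3$ the sixth roots of unity are units of the maximal order of discriminant $-3$, not of $\ZZ[\sqrt{-3}]$ (discriminant $-12$), and become available only through the half-integral solutions permitted by $\lambda=2$; this order-versus-maximal-order passage, together with the parity claims at the prime $2$ (in particular that $t$ is odd when $\lambda=\sqrt2$), is precisely the part you defer, and it is not formal. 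In short: the skeleton is right and matches the cited proofs, but the minimal-exponent parametrisation, the order-two class computation, and the unit and prime-$2$ analysis are claimed rather than proved.
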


Now, in order to obtain an upper bound for solutions, we quote a result on lower bounds for linear forms in the logarithms of two algebraic numbers. To do this, we first present some notations. Let $\varphi_1$ and $\varphi_2$  be real algebraic numbers with $|\varphi_1|\geq1$ and $|\varphi_2|\geq1$. We consider the linear form
\begin{equation*}
\Omega=c_2 \log \varphi_2-c_1 \log \varphi_1,
\end{equation*}
where $\log \varphi_1$,  $\log \varphi_2$ are any determinations of the logarithms of  $\varphi_1$,  $\varphi_2$ respectively, and $c_1$, $c_2$ are positive integers. Let $\varphi$ be any non-zero algebraic number with minimal polynomial over $\mathbb{Z}$ is  $a_{0} \prod_{j=1}^{d} (X-\varphi^{(j)})$ which is of degree $d$ over $\mathbb{Q}$. We denote by
\begin{equation*}
h(\varphi)=\frac{1}{d} \Big( \log|a_{0}|+ \sum_{j=1}^{d} \log\max\{1,|\varphi^{(j)}|\}\Big),
\end{equation*}
the absolute logarithmic height of $\varphi$. Suppose that $B_1$ and $B_2$ are real numbers $\geq1$ such that
\[
\log B_{j} \geq \max \Big\{ h(\varphi_{j}), \frac{|\log \varphi_{j}|}{D}, \frac{1}{D} \Big\}\ \ \ (j=1,2),
\]
where the number field $\mathbb{Q}(\varphi_{1}, \varphi_{2})$ over $\mathbb{Q}$ has degree $D$. Define
\begin{equation}\label{La}
d'= \dfrac{c_1}{D\log B_2}+\dfrac{c_2}{D\log B_1}.
\end{equation}

The following lemma is an immediate consequence of \cite[Corollary 2]{La}. Here we take $m=10$ and $C_2=25.2$ (according to the notation of the paper).

\begin{lemma}{\rm (\cite{La})}\label{Lemma2.3}
	Suppose that $\Omega$ is given as above and $\varphi_1>1$ and $\varphi_2>1$ are multiplicatively independent. Then
	\[
	\log |\Omega| \geq -25.2 D^4 \max \{\log d'+0.38, 10/D\})^2 \log B_1 \log B_2.
	\]
\end{lemma}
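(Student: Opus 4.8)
The plan is to obtain this statement directly from Laurent's general lower bound for linear forms in two logarithms, namely Corollary~2 of \cite{La}, by specializing its free parameters; I would not attempt to reprove a Baker-type estimate from first principles. The first step is to recall that corollary in full. For two multiplicatively independent algebraic numbers and the associated two-term linear form, it supplies, for each admissible value of an auxiliary integer parameter $m$, an inequality of the shape
\[
\log|\Omega| \ge -C(m)\, D^4 \bigl(\max\{\log d' + \kappa,\; m/D,\; \ldots\}\bigr)^2 \log B_1 \log B_2,
\]
where $C(m)$ and $\kappa$ are the explicit constants tabulated in \cite{La}, $D$ is the degree of the field generated by the two numbers, the possible further entries in the maximum are lower-order terms, the parameters $\log B_1,\log B_2$ obey exactly the normalization $\log B_j \ge \max\{h(\varphi_j), |\log\varphi_j|/D, 1/D\}$ fixed just before the statement, and $d'$ is the quantity defined in \eqref{La}.

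The second step is to set up the dictionary between Laurent's notation and the notation already fixed above. His algebraic numbers, positive integer coefficients, height parameters, field degree, and the combination he writes as $b'$ correspond respectively to $\varphi_1,\varphi_2$, to the coefficients $c_1,c_2$ of $\Omega$, to $B_1,B_2$, to $D$, and to $d'$. The hypotheses we may use---that $\varphi_1$ and $\varphi_2$ are real, both greater than $1$, and multiplicatively independent---are exactly the configuration covered by Corollary~2. Reality and positivity guarantee that $\log\varphi_1,\log\varphi_2$ are the ordinary real logarithms and that the auxiliary quantities in Laurent's bound take their simplest admissible values, so that no term beyond $\log d' + \kappa$ and $m/D$ survives inside the maximum.

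The third step is to specialize the auxiliary parameter to $m = 10$. Reading the corresponding entry from the table in \cite{La} yields $C(10) = 25.2$ and $\kappa = 0.38$, while $m/D$ becomes $10/D$. Substituting these values collapses the maximum to two terms and produces
\[
\log|\Omega| \ge -25.2\, D^4 \bigl(\max\{\log d' + 0.38,\; 10/D\}\bigr)^2 \log B_1 \log B_2,
\]
which is precisely the asserted inequality.

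The one delicate point---and hence the main obstacle---is the faithful transcription of Laurent's constants and normalizations rather than any genuine analytic difficulty. I would have to confirm that the value $m = 10$ really produces the stated numbers $25.2$ and $0.38$; that Laurent's own requirement on his height parameters is implied by, and not stronger than, the normalization on $B_1,B_2$ adopted here; and that, under the standing hypothesis of two multiplicatively independent reals exceeding $1$, every auxiliary entry of Laurent's maximum other than $\log d' + 0.38$ and $10/D$ is indeed dominated and may be dropped. Once these bookkeeping checks are carried out, the lemma follows immediately.
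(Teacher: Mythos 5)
Your proposal is correct and matches the paper exactly: the paper itself states that Lemma \ref{Lemma2.3} is an immediate consequence of Corollary 2 of \cite{La} with the parameter choice $m=10$ (giving the constants $25.2$ and $0.38$), which is precisely the specialization you carry out. The bookkeeping checks you flag are the only content of the ``proof,'' and the paper handles them the same way.
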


\section{Proof of Theorem \ref{theo:1.1}}
When $n=2$, the equation \eqref{1.1} becomes $2^z-4^y=1$. It is clear that this equation has no positive integer solutions. So, in this section we suppose that $n>2$ and
we distinguish four subcases: $n>64$, $n \ge 7$ and $z\ge2n$, $7\le n \le 64$ and $z<2n$, $2<n<7$ separately.

\subsection{The case $n>64$}\label{sec:3.1}

Suppose that $n>64$. If $(n-1)^x\ge\frac{1}{2}n^z$, then $x\ge z$. First assume that $x>z$. Then one has
$$1>\left(1-\frac{1}{n}\right)^z\cdot(n-1)^{x-z}>\frac{(n-1)^{x-z}}{e^{z/n}},$$ since $\left(1-\frac{1}{n}\right)^z>e^{-z/n}$. It follows that $e^{z/n}>(n-1)^{x-z}>63$, which implies that
$z>4n$.

Next consider the case where $x=z$. Then one gets $(n+2)^y>\frac{1}{n}\cdot n^z$. Note that $y<z$, and we have
$$e^{2y/n}>\left(1+\frac{2}{n}\right)^{n/2\times2y/n}>\frac{1}{n}\cdot n^{z-y}.$$
If $z-y>1$, then the previous inequality implies that $e^{2y/n}>n\ge 64$, so $y>2n$.

If $(n-1)^x<\frac{1}{2}n^z$, then $(n+2)^y>\frac{1}{2}\cdot n^z$, hence $$e^{2y/n}>\left(1+\frac{2}{n}\right)^{n/2\times2y/n}>\frac{1}{2}\cdot n^{z-y}.$$
If $z-y>1$, then using the above inequality we get that $e^{2y/n}>n^2/2\ge 64$, so $y>2n$. If $z-y=1$, similarly, we have $e^{2y/n}>\frac{n}{2}\ge 32$, so $z>y>n$. Therefore we have proved that
$$x>z>4n \quad \mbox{ or } \quad z>y>n \quad \mbox{ or } \quad x=z=y+1.$$

Now from $(n-1)^{y+1}+(n+2)^y=n^{y+1}$ and $n>64$, we get $y\le3$ or $y>n$. If $(n-1)^{y+1}+(n+2)^y=n^{y+1}$ and $y\le3$, then a simple computation shows that $y=1$ and $n=3$. Consequently,
we have shown that $$x>z>4n \quad \mbox{ or } \quad z>y>n$$ when $n>64$.

\quad

\textbf{$(i)$ The case $y$ is even}

\quad

Assume that $y$ is even. Since $z>n$ and $n>64$, by Proposition \ref{Hua} we have $z>n>h(\mathbb{Q}(\sqrt{-4(n-1)}))$, where $h(\mathbb{Q}(\sqrt{-d}))$ denotes the class number for the quadratic
field $\mathbb{Q}(\sqrt{-d})$. Therefore by Lemma \ref{Represent} there exist integers $x_0, y_0, z_0$ such that $z_0\mid h(\mathbb{Q}(\sqrt{-4(n-1)}))$ and
$$(n-1)x_0^2+y_0^2=n^{z_0}, \quad 2\nmid x,$$
or
$$x_0^2+y_0^2=n^{z_0}, \quad 2\mid x,$$
and we have
\begin{equation}\label{A1}
(n-1)^{\frac{x-1}{2}}\sqrt{-(n-1)}+(n+2)^{\frac{y}{2}}=\pm(x_0\sqrt{-(n-1)}+y_0)^{z/z_0},
\end{equation}
or
\begin{equation}\label{A2}
(n-1)^{\frac{x}{2}}\sqrt{-1}+(n+2)^{\frac{y}{2}}=\pm(x_0\sqrt{-1}+y_0)^{z/z_0},
\end{equation}
which is impossible by Lemma \ref{le21} and \ref{le22} when $z/z_0\ge5$.

If $z/z_0=2$, then the argument for $z/z_0=4$ is the same. So, we can omit it.

First consider the equation \eqref{A1} with $z/z_0=2$. Then one has\\ $(n-1)^{\frac{x-1}{2}}\sqrt{-(n-1)}+(n+2)^{\frac{y}{2}}=\pm(x_0\sqrt{-(n-1)}+y_0)^2$, and so
$$2x_0y_0=\pm (n-1)^{\frac{x-1}{2}}.$$
Since $x_0^2(n-1)+y_0^2=n^{z_0}$, we have $y_0=\pm1$ and $2x_0=(n-1)^{\frac{x-1}{2}}$, it follows that $(n-1)^x+4=4n^{z_0}$. Taking modulo 4 and recall that $x$ is odd, we have $n=3$.
We know that the equation $2^x+5^y=3^z$ has only the solution $(x, y, z)=(1, 2, 3)$ (see \cite[Theorem 3]{Na}) when $2\nmid x$ and $2\mid y$. But, this is not a desired solution.

Next consider the equation \eqref{A1} with $z/z_0=3$. Then one gets\\ $(n-1)^{\frac{x-1}{2}}\sqrt{-(n-1)}+(n+2)^{\frac{y}{2}}=\pm(x_0\sqrt{-(n-1)}+y_0)^3$, and hence
$$x_0(3y_0^2-x_0^2(n-1))=\pm (n-1)^{\frac{x-1}{2}}, \quad y_0(y_0^2-3x_0^2(n-1))=\pm (n+2)^{\frac{y}{2}}.$$
Note that $\gcd(3, n-1)=1$, so we derive that $x_0=\pm (n-1)^{\frac{x-1}{2}}$ and $y_0=\pm (n+2)^{\frac{y}{2}}$, and $x_0^2(n-1)+y_0^2=n^{z_0}$, which is impossible.

Now first consider the equation \eqref{A2} with $z/z_0=2$. Then one obtains\\ $(n-1)^{\frac{x}{2}}\sqrt{-1}+(n+2)^{\frac{y}{2}}=\pm(x_0\sqrt{-1}+y_0)^2$, and so
$$2x_0y_0=\pm (n-1)^{\frac{x-1}{2}}.$$
Since $x_0^2(n-1)+y_0^2=n^{z_0}$, we have $y_0=\pm1$ and $2x_0=(n-1)^{\frac{x}{2}}$, it follows that $(n-1)^x+4=4n^{z_0}$. Taking modulo 4 and recall that $x$ is even, we have $n=5$.
We know that the equation $4^x+7^y=5^z$ has no solution $(x, y, z)$ (see \cite[Theorem 9]{Na}) when $2\nmid x$ and $2\mid y$.

Finally consider the equation \eqref{A2} with $z/z_0=3$. Then one has\\ $(n-1)^{\frac{x}{2}}\sqrt{-1}+(n+2)^{\frac{y}{2}}=\pm(x_0\sqrt{-1}+y_0)^3$, and hence
$$x_0(3y_0^2-x_0^2(n-1))=\pm (n-1)^{\frac{x}{2}}, \quad y_0(y_0^2-3x_0^2(n-1))=\pm (n+2)^{\frac{y}{2}}.$$
Note that $\gcd(3, n-1)=1$, so we derive that $x_0=\pm (n-1)^{\frac{x}{2}}$ and $y_0=\pm (n+2)^{\frac{y}{2}}$, and $x_0^2(n-1)+y_0^2=n^{z_0}$,  which is impossible. So, this proof of the
case $y$ is even is completed.

\quad

\textbf{$(ii)$ The case $y$ is odd and $x$ is even}

\quad

Suppose that $y$ is odd and $x$ is even. Since $z>n$ and $n>64$, by Proposition \ref{Hua} one has $z>n>h(\mathbb{Q}(\sqrt{-4(n+2)}))$. Therefore by Lemma \ref{Represent} there exist integers $x_0, y_0,
z_0\mid h(\mathbb{Q}(\sqrt{-4(n+2)}))$ such that
$$x_0^2+(n+2)y_0^2=n^{z_0}, $$
and one has
$$(n-1)^{\frac{x}{2}}+(n+2)^{\frac{y-1}{2}}\sqrt{-(n+2)}=\pm(x_0+y_0\sqrt{-(n+2)})^{z/z_0},$$
which is impossible by Lemma \ref{le21} and \ref{le22} when $z/z_0\ge5$. It is easy to check that $z/z_0=2, 3, 4$ is also impossible. Its proof is similar to the case $(i).$

\quad

\textbf{$(iii)$ The case $x$ and $y$ are odd}

\quad

Assume that $x$ and $y$ are odd. If $z$ is even, then by taking modulo $n+1$ for equation \eqref{1.1}, one gets
$$(-2)^x\equiv0\pmod{n+1},$$
which implies that $n+1=2^t$,  $t\le x$. Now equation \eqref{1.1} becomes
\begin{equation}\label{xx}
(2^t-2)^x+(2^t+1)^y=(2^t-1)^z,\,\, 2\mid z.
\end{equation}
Consider the equation \eqref{xx} modulo 3. If $t$ is even, then one obtains
$(-1)^x+(-1)^y\equiv 0 \pmod{3}$, which is $-2\equiv 0 \pmod{3}$. This is a contradiction. If $t$ is odd, then one gets $0 \equiv 1 \pmod{3}$ which is also contradiction. So, the proof is completed for $(iii)$.

\quad

\textbf{$(iv)$ The case $x$, $y$ and $z$ are odd}

\quad

Suppose that $x$, $y$ and $z$ are odd. Taking modulo $n$ for equation \eqref{1.1}, one has
$$\left(\frac{2}{n}\right)=1,$$
which implies that $n\equiv 1, 7\pmod{8}$.

First consider the case $n\equiv1\pmod{8}$. If $n\equiv1\pmod{8}$,   by taking modulo $8$  for equation \eqref{1.1}, one gets
$$3^y\equiv1\pmod{8},$$ which implies that $2\mid y$. This is a contradiction.

Next consider the case $n\equiv7\pmod{8}$. Here we have two subcases:

\begin{itemize}
\item The case $x\ge 3$.

If $n\equiv7\pmod{8}$, by taking modulo $8$  for equation \eqref{1.1}, one has
$$6^x+1^y\equiv(-1)^z\pmod{8},$$ which implies that $2\mid z$, a contradiction where $x\ge 3$ is odd.

\item The case $x=1$.

Now we have to consider the equation \eqref{1.1} where $n\equiv 7 \pmod{8}$, $y$ and $z$ are odd, $x=1$. We have $x>z>4n$ or $z>y>n$ when $n>64$. It is clear that $x=1$ is impossible where $x>z>4n$. So we consider the case $z>y>n>64$.

The equation \eqref{1.1} is reduced to solving the equation
\begin{equation}\label{eq3.34}
n-1+(n+2)^y=n^z.
\end{equation}
Since $z>y>n$ and $n \equiv 7 \pmod 8$, we have $y\geq73$ and $n\geq 71$. If $z\geq2y$, then
\[
n-1=n^z-(n+2)^y\geq n^{2y}-(n+2)^y> n^2-(n+2)
\]
\[=n^2-n-2,
\]
which is impossible. So, $2y>z$.

Let
\[
\Omega=z\log n-y\log(n+2).
\]
From \eqref{eq3.34}, we find that $e^{\Omega}-1=\frac{n-1}{(n+2)^y}$. Since $n\geq71$ and $\log(1+t)<t$ for $t>0$, one gets
\begin{equation}\label{eq3.35}
0< \Omega < \dfrac{1}{1.04(n+2)^{y-1}}.
\end{equation}
From the definition $\Omega$ and $2y>z$, we get
\[
\frac{2y-1}{y}\geq\frac{z}{y}>\frac{\log(n+2)}{\log n}.
\]
So, we have
\begin{equation}\label{eq3.36}
y> \dfrac{\log n}{\log(\frac{n^2}{n+2})}> (n-70.99)\log n.
\end{equation}

In \eqref{La} taking
\[
D=1, \ \ c_1=y, \ \ c_2=z, \ \ B_1=n+2, \ \ B_2=n
\]
and using \eqref{eq3.35}, we obtain
\[
\dfrac{z}{\log(n+2)}-\dfrac{y}{\log n}<\dfrac{1}{1.04 (n+2)^{y-1}\log n\log(n+2)}
<0.36\cdot10^{-135}.
\]
The above inequality gives

\[d'<\frac{2y}{\log n}+0.36\cdot10^{-135},\]
where
$d'=\frac{z}{\log(n+2)}+\frac{y}{\log n}.
$

If $\log d'+0.38>10$, then we get
\begin{equation}\label{eq3.37}
\log |\Omega| \geq -25.2(\log(\dfrac{2y}{\log n}+0.36\cdot10^{-135})+0.38)^2\log n\log(n+2).
\end{equation}
Using \eqref{eq3.35}, we have
\begin{equation}\label{eq3.38}
\log |\Omega| < -(y-1)\log (n+2)-0.039.
\end{equation}
Combining \eqref{eq3.37} and \eqref{eq3.38} gives
\begin{equation}\label{eq3.39}
\dfrac{y}{\log n}<\dfrac{-0.039+\log (n+2)}{\log n\cdot\log(n+2)}+25.2(\log(\dfrac{2y}{\log n}+0.36\cdot10^{-135})+0.38)^2.
\end{equation}
So,
\[
\dfrac{y}{\log n}<0.24+25.2(\log(\dfrac{y}{\log n}+0.18\cdot10^{-135})+1.08)^2.
\]
As a result of the above inequality, one obtains $y<1870\log n$. Then $d'<2y/\log n+0.36\cdot10^{-135}<3741$ so that $\log d'<8.23$. But this contradicts with our assumption $\log d'+0.38>10$.

Next we consider the case $\log d'+0.38\leq10$. So, it follows from Lemma \ref{Lemma2.3} that
\begin{equation}\label{eq3.40}
\log |\Omega| \geq-25.2\cdot10^2\log n\log (n+2).
\end{equation}
Using \eqref{eq3.38} and \eqref{eq3.40} leads to
\begin{equation}\label{eq3.41}
y-1 < 25.2\cdot10^2\log n.
\end{equation}
Combining \eqref{eq3.41} and \eqref{eq3.36}, we have
\[
n<70.99+\dfrac{1}{\log n}+25.2\cdot10^2 < 2591.
\]
So, we obtain upper bounds for $y$ and $n$, i.e.
\begin{equation}\label{eq3.42}
n< 2591 \ \ \textrm{and} \ \ y<19808.
\end{equation}
Finally, running PARI-GP \cite{PARI} program, we see that the equation \eqref{eq3.34} has no solution $(n,x,y,z)$ in the ranges $71\leq n< 2591$, $73\leq y <19808$, $73<z<39616$ and $n\equiv 7 \pmod 8$. Hence the proof of this case is completed.

\end{itemize}

\subsection{The case $n\ge 7$ and $z\ge 2n$}
Since $n\ge7$ and $z\ge2n$, we have $z\ge 2n>h(\mathbb{Q}(\sqrt{-4(n+2)}))$. Then using the argument in the proof of the Case \ref{sec:3.1}, the proof is completed.

\subsection{The case $2<n<7$}
As $2<n<7$, the equation \eqref{1.1} turns into the following equations
\begin{equation}\label{P1}
2^x+5^y=3^z
\end{equation}
\begin{equation}\label{P2}
3^x+6^y=4^z
\end{equation}
\begin{equation}\label{P3}
4^x+7^y=5^z
\end{equation}
\begin{equation}\label{P4}
5^x+8^y=6^z
\end{equation}
where $n=3, 4, 5, 6$, respectively. We first consider the equation \eqref{P1}. By \cite[Theorem 3]{Na}, this equation has only the positive integer solutions $(x,y,z)=(1, 2, 3), (2,1,2)$ which is a desired solution. Then we deal with the equation \eqref{P2} which is impossible. Next we consider the equation \eqref{P3}. Using  \cite[Theorem 9]{Na}, we see
that \eqref{P3} has no positive integer solutions. Finally, we consider the equation \eqref{P4} which is impossible.

\subsection{The case $7\le n \le 64$}
Here we have the case $7\le n \le 64$ for the equation \eqref{1.1}. We wrote a short program in PARI-GP \cite{PARI}. Using it, we see that the equation \eqref{1.1} has no integer
$(n,x,y,z)$ solutions where $7\le n\le 64$,  and $z<2n$. Consequently, the proof  is completed.

\subsection*{Acknowledgements}
We would like to thank to the referee for carefully reading our paper and for giving such constructive comments which substantially helped improving the quality of the paper. The second and
third authors would like to thank to Dr. Paul Voutier for useful suggestions about PARI-GP computations and were supported by T\"{U}B\.{I}TAK (the Scientific and Technological Research
Council of Turkey) under Project No: 117F287. The first and fourth authors were supported by NSF of China (Grant No: 11671153).

\end{document}